\address{\newline{\normalsize Kavli IPMU (WPI), The University of Tokyo, 5-1-5 Kashiwanoha, Kashiwa, 277-8583, Japan}
\newline{\it E-mail address}: ilya.karzhemanov@ipmu.jp}
\makeatletter\@addtoreset{equation}{section}\makeatother
\renewcommand{\theequation}{\thesection.\arabic{equation}}
\renewcommand{\thesubsection}{\bf\thesection.\arabic{equation}}
\makeatletter\@addtoreset{subsection}{equation}\makeatother
\newtheorem{theorem}[equation]{Theorem}
\newtheorem{prop}[equation]{Proposition}
\newtheorem{lemma}[equation]{Lemma}
\newtheorem{cor}[equation]{Corollary}
\theoremstyle{remark}
\newtheorem{remark}[equation]{Remark}
\theoremstyle{definition}
\newtheorem{ex}[equation]{Example}
\newtheorem*{que}{Question}
\newcommand{\cel}{\mathbb{Z}}
\newcommand{\com}{\mathbb{C}}
\newcommand{\fie}{\textbf{k}}
\newcommand{\f}{\mathbb{F}}
\newcommand{\p}{\mathbb{P}}
\newcommand{\aut}{\text{Aut}}
\newcommand{\map}{\longrightarrow}
\newcommand{\ve}{\mathcal{E}}
\newcommand{\rk}{\text{rk}}
\newcommand{\pic}{\text{Pic}}
\title{Some pathologies of Fano manifolds in positive
characteristic}
\author{Ilya Karzhemanov}
\begin{document}

\begin{abstract}
We construct examples of Fano manifolds, which are defined over a
field of positive characteristic, but not over $\com$.
\end{abstract}

\maketitle

\bigskip

\section{Introduction}
\label{section:intro}

\refstepcounter{equation}
\subsection{}
\label{subsection:intro-0}

Let $X$ be a Fano variety defined over a ground field
$\fie=\bar{\fie}$ (see \cite{isk-pro} or \cite[Ch.
V]{kollar-rat-curves} for the basic notions and facts). We will
also assume $X$ to be smooth.

The main concern of the present note is the following:

\begin{que}
Suppose $p:=\text{char}~\fie>0$. Does there exist $X$ non-liftable
to $\text{char} = 0$ (hence, in particular, $H^2(X,T_X)\ne 0$),
i.e. is it true that there is always some variety $X_0$, defined
over a $\cel$-subalgebra $R\subset\com$ of finite type, so that $X
= X_0\otimes_{R}\fie$ for $\fie := R/\frak{p}$ and a prime ideal
$\frak{p}\subset R$ with $\frak{p}\cap\cel = (p)$?
\end{que}

The problem is trivial for $\dim X = 1$, a bit trickier when $\dim
X = 2$ (but still one does not obtain any new del Pezzo surfaces
in positive characteristic due to e.g. \cite[Ch. IV,\S
2]{manin-cubic-forms}), and does not seem to get much attention in
the case of $\dim X\ge 3$.

\begin{remark}
\label{remark:van-straten} In \cite{cynk-van-straten}, many (both
non- and algebraic) Calabi-Yau $3$-folds, defined over $\f_p$,
have been constructed. The idea was to take a \emph{rigid} (i.e.
with $H^1(T) = 0$) nodal CY variety $\mathcal{X}$, defined over
$\cel$, in such a way that its mod $p$ reduction $X_p :=
\mathcal{X}\otimes_{\cel}\f_p$ acquires an additional node. Then
the needed CY $3$-fold $X$ is constructed via a small resolution
$X \map X_p$ (see \cite[Theorem 4.3]{cynk-van-straten}). We will
discuss other (general type) examples in
Remark~\ref{remark:brief-his} below.
\end{remark}

Recall that partial classification of Fano $3$-folds $X$, subject
to the ``numerical" constraints $\rk\,\text{Pic}=1$ and either
$p>5$ or $D\cdot c_2(X)>0$ for any ample divisor $D$ on $X$, was
obtained in \cite{shef-bar}.\footnote{Actually, in the light of
Corollary~\ref{theorem:main-cor-1} below, for the arguments in
\cite{shef-bar} to carry on (cf. \cite[Theorem 1.4]{shef-bar}) one
has to \emph{assume} in addition that $H^1(X,\mathcal{O}_X(-D))=0$
for any (or at least those considered in \cite{shef-bar}) ample
divisors $D$ on $X$.} As it turns out, all these $X$ do admit a
lifting to $\com$, which is insufficient for us (cf.
\cite{balico}, \cite{dejong-starr}, \cite{megyesi}, \cite{saito}).

Thus, in order to find $X$ with an interesting behavior in
positive characteristic one should turn to the ``qualitative"
properties of Fano varieties, as the following example suggests
(compare with Remark~\ref{remark:van-straten}):

\begin{ex}
\label{example:easy-ex} The hypersurface $Y_p := (\sum x_i^py_i =
0) \subset \p^n\times\p^n$ (see
\cite[(1.4.3.2)]{kollar-rat-curves}) is a Fano variety iff $p\le
n$. Furthermore, $Y_p$ is an (obvious) $SL(n+1)$-homogeneous
space, which can deform to a non-homogeneous variety. This
violates a similar property of homogeneous spaces defined over
$\fie\subseteq\com$. On the other hand, one may consider quotient
schemes $G/P$, where $G$ is a semi-simple algebraic group and
$P\subset G$ is its \emph{non-reduced} parabolic subgroup. (These
$G/P$ are rational for $p>3$ due to \cite{wenzel}.) Yet,
unfortunately, both of the examples lift to characteristic $0$, at
least when $\rk~\text{Pic}=1$ (cf. the discussion in
\cite{shef-bar}).\footnote{Another possible approach to answer
{\bf Question} affirmatively is via the \emph{wild} conic bundle
structures (for $p=2$) on Fano manifolds (see \cite[Theorem
5.5]{shef-bar}). But again one does not get anything new in
characteristic $2$ due to \cite[Theorem 7]{mori-saito} (see also
\cite{totaro-1} for some discussion and results on (birational)
geometry of wild conic bundles).} Finally, let us mention
$p$-coverings $X$ of Fano hypersurfaces in $\p^{n+1}$, for which
$\wedge^{n-1}\Omega_X^1$ contains a positive line subbundle (see
\cite{kollar-non-rat-hypers}). But again, even though this
property of $\wedge^{n-1}\Omega_X^1$ is specific to positive
characteristic (see e.g. \cite{fedya}), $X$ is (obviously)
liftable to $\com$.
\end{ex}

We answer our {\bf Question} via the next

\begin{theorem}
\label{theorem:main} For any $p\ge 3$ and $d,N\gg 1$, there exists
a flat family $\mathcal{X}\map B_d$ of Fano $3$-folds,
parameterized by an algebraic variety $B_d$ of dimension $2d$,
such that

\begin{itemize}

    \item the fiber $X := \mathcal{X}_b$ is non-liftable to $\text{char} = 0$ for
    generic $b\in B_d$;

    \smallskip

    \item the fibers $\mathcal{X}_b$ and $\mathcal{X}_{b'}$ are non-isomorphic for generic
    $b\in B_d,
    b'\in B_{d'}$, with $d\ne d'$;

    \smallskip

    \item $(-K_{\mathcal{X}_b}^3) \ge N$ for the anticanonical degree of $X = \mathcal{X}_b$.

\end{itemize}

\end{theorem}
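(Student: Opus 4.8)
The plan is to realize the desired family by a "small-resolution" mechanism analogous to the Calabi--Yau construction of Remark~\ref{remark:van-straten}, but adapted to the Fano setting and with extra moduli built in. First I would fix a projective nodal $3$-fold $\mathcal{Y}$ over $\cel$ whose mod-$p$ reduction $Y_p$ has \emph{more} singular points than the geometric generic fiber, the extra nodes appearing precisely because some defining equation acquires a new singular zero only in characteristic $p$ (this is where condition $p\ge 3$ should enter, via a Frobenius-type degeneracy). To manufacture the extra anticanonical degree and the $2d$-dimensional base, I would take $\mathcal{Y}$ to depend on $d$ free parameters (e.g. a complete intersection of high multidegree in a product of projective spaces, or a weighted hypersurface, whose coefficient space one restricts to a $d$-parameter subfamily chosen so that the characteristic-$p$ node persists), and choose the multidegrees so that $(-K^3)$ grows with $d$; the base $B_d$ will be (an open subset of) that parameter space, of dimension $2d$ after accounting for the projective automorphisms one quotients out.

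Next I would perform a small resolution $\pi:\mathcal{X}\to\mathcal{Y}$ of the total space along the locus of the characteristic-$p$ nodes, fiberwise over $B_d$; since a node on a threefold admits a small resolution after a base change or étale-locally, and since the relevant component of the singular locus is a section over $B_d$, I get a flat family $\mathcal{X}\to B_d$ of smooth $3$-folds. Fano-ness of the generic fiber $X=\mathcal{X}_b$ should follow from adjunction: $\pi$ is crepant (small resolutions of nodes are crepant), so $-K_X=\pi^*(-K_{Y_p})$, which is nef and big; to upgrade nef-and-big to ample I would use the contraction theorem / the fact that $-K_X$ is strictly positive on the one curves contracted by $\pi$ — actually the $\pi$-exceptional curves are exactly the ones on which a $\pi^*$-pullback has degree zero, so one instead argues that $-K_X$ is the pullback of an ample class plus a multiple of the exceptional curve class and checks positivity directly, or one simply takes $\mathcal{Y}$ with Picard rank one so that $-K_X$ is proportional to the unique non-pullback generator and positivity is automatic on the small number of flopping curves. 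Non-liftability of $X$ is then the heart of the matter and I would derive it exactly as in \cite[Theorem 4.3]{cynk-van-straten}: if $X$ lifted, the lift would specialize $Y_p$ to a smoothing, contradicting the rigidity of $\mathcal{Y}$ (i.e.\ $H^1(T_{\mathcal{Y}})=0$, so the characteristic-zero $\mathcal{Y}$ has no equisingular deformation smoothing that node) — this forces $H^2(X,T_X)\ne 0$ as claimed in the {\bf Question}.

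Finally, the two remaining bullets are comparatively soft. For $(-K_X^3)\ge N$: since $\pi$ is small, $(-K_X^3)=(-K_{Y_p}^3)$, and the latter is a fixed polynomial in the multidegrees of $\mathcal{Y}$ that tends to $\infty$, so choosing $d,N\gg 1$ appropriately gives the bound. For non-isomorphism of $\mathcal{X}_b$ and $\mathcal{X}_{b'}$ when $d\ne d'$: the invariant $(-K^3)$ is locally constant in families and was arranged to be strictly increasing in $d$ (or at least to take different values for $d\ne d'$ after a harmless reparametrization), so generic fibers over bases of different "index" $d$ have different anticanonical degree, hence cannot be isomorphic. I expect the main obstacle to be the \emph{simultaneous} arrangement of three things on one family: that the characteristic-$p$ node genuinely exists and is isolated (so the small resolution is defined and the fiber is smooth), that the characteristic-zero model is rigid in the precise sense needed for the non-liftability argument, and that the moduli count comes out to be exactly $2d$ — balancing the rigidity (few moduli) against the demand for a $2d$-dimensional $B_d$ (many moduli) is the delicate point, presumably resolved by letting the moduli come from the \emph{position} of the nodes / the small-resolution data rather than from deformations of $\mathcal{Y}$ itself.
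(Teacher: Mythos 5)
Your plan takes a genuinely different route from the paper (which builds $X$ as an inseparable $p$-cover of a $\p^1$-bundle $W=\p(\ve)$ over the Raynaud surface and derives non-liftability from the failure of Kodaira vanishing), but as it stands it has two gaps that I think are fatal rather than technical. First, a small resolution $\pi:\mathcal{X}_b\to Y_p$ of a node on a $3$-fold is crepant, so $-K_{\mathcal{X}_b}=\pi^*(-K_{Y_p})$ has degree \emph{exactly zero} on every $\pi$-exceptional curve and therefore can never be ample: you get at best a weak Fano, not a Fano. Your proposed repairs do not close this: there is no ``pullback of an ample class plus a multiple of the exceptional curve class'' available, because $-K_{\mathcal{X}_b}$ \emph{is} the pullback class on the nose; and taking $\mathcal{Y}$ of Picard rank one makes things worse, since a factorial node admits no projective small resolution at all, so you would also lose projectivity of the fibers. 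This is precisely why the Cynk--van Straten mechanism produces Calabi--Yau (where crepancy is what you want) and does not transport to the Fano setting.

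Second, and decisive for the third bullet: your $\mathcal{Y}$ is defined over $\cel$, so $(-K_{Y_p}^3)$ equals the anticanonical degree of the characteristic-zero fiber (intersection numbers are constant in flat families), and a small resolution does not change $(-K^3)$. But smooth Fano $3$-folds over $\com$ satisfy $(-K^3)\le 64$ (and Gorenstein canonical weak Fanos satisfy $(-K^3)\le 72$); concretely, the Fano condition bounds the multidegrees of your complete intersections, so $(-K^3)$ takes only finitely many values and cannot ``grow with $d$''. Hence $(-K_{\mathcal{X}_b}^3)\ge N$ for $N\gg 1$ is unattainable by \emph{any} construction that reduces a characteristic-zero model mod $p$ and resolves it crepantly: the unboundedness is itself one of the characteristic-$p$ pathologies being asserted and must come from data intrinsic to characteristic $p$ (in the paper, from varying the zero-dimensional subscheme $Z$ and the twist $n$ in the extension defining $\ve$, which also supplies the $2d$-dimensional base as the Hilbert scheme of $d$ points). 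Your non-liftability step is also not yet an argument --- a lifting of $X$ does not obviously induce a smoothing of $\mathcal{Y}$, and since $H^2(T)=0$ for smooth Fano $3$-folds in characteristic zero the rigidity dichotomy you invoke has no traction --- but the two issues above already block the construction before that point is reached.
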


\begin{remark}
\label{remark:brief-his} Examples of manifolds (in arbitrary
dimension) of general type violating Kodaira vanishing -- more
precisely, having $H^1(L^{-1})\ne 0$ for an ample line bundle $L$,
-- have been constructed in \cite{mukai}. This was a development
of the method from \cite{raynaud}, where a surface $S$ of general
type (with $H^1(S,L^{-1})\ne 0$) is constructed together with a
morphism $S \map C$ onto a curve, so that the
\emph{$\fie(C)$-curve} $S$ violates the Mordell-Weill property. We
follow these trends starting from {\ref{subsection:const-1}}
below, for the $3$-fold $X$ we need is constructed by finding a
$\p^1$-bundle $W$ over $S$ first, with $-K_W$ nef, and then taking
a $p$-covering $X\map W$ \`a la \cite{ekedahl}.
\end{remark}

Next corollary complements the results mentioned in
Remarks~\ref{remark:van-straten}, \ref{remark:brief-his}:

\begin{cor}
\label{theorem:main-cor} For generic $X$ as in
Theorem~\ref{theorem:main} and $m\gg 1$, there is a cyclic
$m$-covering $\pi_m: X_m\map X$ such that $X_m$ is smooth,
$K_{X_m}$ is nef, and $X_m$ is non-liftable to $\text{char} = 0$.
\end{cor}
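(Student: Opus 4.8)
The plan is to realise $X_m$ as a \emph{tame} cyclic cover of $X$, branched along a general member of a very ample system, and then to transfer non-liftability from $X$ to $X_m$ by passing to the $\mu_m$-quotient. First I would fix an ample divisor $A$ on $X$ and pick $m\gg 1$ with $\gcd(m,p)=1$ so that $|mA|$ is very ample and $K_X+(m-1)A$ is ample. Let $D\in|mA|$ be general; since $|mA|$ embeds $X$ into a projective space, the Bertini smoothness theorem applies in characteristic $p$ as well, so $D$ is smooth. With $L:=\mathcal{O}_X(A)$ one has $L^{\otimes m}\cong\mathcal{O}_X(D)$; let $\pi_m\colon X_m\map X$ be the associated cyclic $m$-cover, so that $\pi_{m*}\mathcal{O}_{X_m}=\bigoplus_{i=0}^{m-1}L^{-i}$. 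Since $D$ and $X$ are smooth, so is $X_m$, and $K_{X_m}=\pi_m^{*}\bigl(K_X+(m-1)A\bigr)$ is the pullback of an ample class along a finite surjection; hence $K_{X_m}$ is nef, indeed ample, so $X_m$ is of general type. One also records that $\pi_{m*}\mathcal{O}_{X_m}\supseteq\mathcal{O}_X$, whence any failure of Kawamata--Viehweg vanishing on $X$ propagates, $H^1\bigl(X_m,\mathcal{O}_{X_m}(-\pi_m^{*}B)\bigr)\supseteq H^1\bigl(X,\mathcal{O}_X(-B)\bigr)$ for ample $B$, so that $X_m$ inherits this pathology of $X$; this alone does not obstruct liftability, though, the corresponding vanishing being valid in characteristic $0$ yet able to fail on the special fibre of a lift.

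To obtain non-liftability, suppose $X_m$ lifts to characteristic $0$; one may then take a flat lift $\mathcal{X}_m$ over a complete discrete valuation ring $R$ with residue field $\fie$ and generic fibre of characteristic $0$. The cyclic cover makes $X_m$ a $\mu_m$-variety, with $\mu_m$ \'etale over $\fie$ (as $p\nmid m$) hence linearly reductive, and with $X_m/\mu_m$ the smooth variety $X$ itself; the quotient is smooth because the stabiliser $\mu_m$ at each point of the smooth ramification divisor $R_m:=(\pi_m^{*}D)_{\mathrm{red}}$ acts through a pseudo-reflection fixing $R_m$, and freely elsewhere. The pivotal step is to lift the $\mu_m$-action to $\mathcal{X}_m$, after possibly replacing $\mathcal{X}_m$ by another lift of $X_m$ and $R$ by a finite extension. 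Granting this, $\mathcal{X}:=\mathcal{X}_m/\mu_m$ is $R$-flat (formation of quotients by finite linearly reductive group schemes commutes with base change and preserves flatness and properness), has special fibre $X_m/\mu_m=X$, and is $R$-smooth (the pseudo-reflection picture deforms, $R_m$ lifting to a smooth relative divisor fixed by the lifted action). Thus $\mathcal{X}$ is a lift of $X$ to characteristic $0$, contradicting Theorem~\ref{theorem:main}; hence $X_m$ admits no such lift.

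The main obstacle is precisely this $\mu_m$-equivariance. It is genuinely delicate: the nontrivial $\mu_m$-isotypic components of $H^1(X_m,T_{X_m})$ are in general nonzero, so the forgetful map from $\mu_m$-equivariant deformations of $X_m$ to all deformations need not be surjective on tangent spaces, and an arbitrary lift need not a priori carry the action. Here $p\nmid m$ is indispensable: $\mu_m$ being linearly reductive, its higher cohomology vanishes and the equivariant deformation theory of $X_m$ is controlled by the $\mu_m$-invariants of $H^{\bullet}(X_m,T_{X_m})$; moreover relabelling the special fibre by an element of $\mu_m$ turns a lift into a lift, so the nonempty set of lifts of $X_m$ is $\mu_m$-stable inside the miniversal deformation, on which $\mu_m$ acts linearly while fixing the origin, and a d\'evissage averaging the successive lifting and obstruction classes over $\mu_m$ (legitimate since $m$ is a unit in $R$) then produces an equivariant lift. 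I expect this d\'evissage, rather than the geometric set-up, to be the technical heart of the proof. (Alternatively one may keep the construction ``over the base'', taking $D=\pi^{*}D_W$ for the morphism $\pi\colon X\map W$ onto the $\p^1$-bundle $W$ of Remark~\ref{remark:brief-his} and a general ample $D_W$ transverse to the branch locus of $\pi$, so that $X_m\cong X\times_W W_m$ with $W_m\map W$ the cyclic $m$-cover along $D_W$, and the non-liftability of $X_m$ gets tied to that of the Raynaud-type surface $S$ beneath $W$, for which it is the Deligne--Illusie criterion, $\dim S=2<p$ when $p\ge 3$; but $\pi$ being a wild $p$-cover this route reintroduces its own difficulties, and the tame quotient argument above is the cleaner one.)
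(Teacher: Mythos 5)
Your proposal follows essentially the same route as the paper: a tame cyclic $m$-cover branched along a smooth member of a very ample multiple of an ample class (the paper takes $|-mK_X|$, so that $K_{X_m}=\pi_m^*(-(m-2)K_X)$ is nef), followed by the observation that a lift of $X_m$ would force a lift of the $\mu_m$-action and hence of the quotient $X=X_m/\mu_m$, contradicting Theorem~\ref{theorem:main}. You are in fact more honest than the paper about the one genuinely delicate point --- equivariantly lifting the $\mu_m$-action via linear reductivity and averaging of obstruction classes --- which the paper dispatches by simply asserting that the projective $\mu_m$-action ``must also lift.''
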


The proof of Theorem~\ref{theorem:main} also yields (compare with
\cite{takayama})

\begin{cor}
\label{theorem:main-cor-1} In the notations from
Corollary~\ref{theorem:main-cor}, irregularity $q(X) \ne 0$, as
well as $q(X_m) \ne 0$.
\end{cor}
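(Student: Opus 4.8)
The plan is to show that none of the maps that build $X$ out of the Raynaud–Mukai surface $S$ can destroy $H^{1}$ of the structure sheaf, so that $q(X)$ is bounded below by $q(S)$, which is itself bounded below by the genus of the base curve of the fibration $S\map C$ used in the construction of {\ref{subsection:const-1}}; since that curve has positive genus we get $q(X)>0$, and one extra finite cover then handles $X_{m}$. Recall that $X$ sits in a tower $X\stackrel{\pi}{\map}W\stackrel{\rho}{\map}S\stackrel{f}{\map}C$, where $\rho$ is the $\p^{1}$-bundle of Remark~\ref{remark:brief-his}, $\pi$ is the degree-$p$ inseparable covering \`a la \cite{ekedahl}, and $f$ has geometrically connected fibres onto a smooth curve $C$ with $g(C)\ge 1$.

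First I would run the three reductions. Since $\pi$ is finite, $R^{>0}\pi_{*}\mathcal{O}_{X}=0$, so Leray gives $H^{1}(W,\pi_{*}\mathcal{O}_{X})\cong H^{1}(X,\mathcal{O}_{X})$; as the covering is cut out by a section of a line bundle $\mathcal{L}$ on $W$, one has $\pi_{*}\mathcal{O}_{X}=\bigoplus_{i=0}^{p-1}\mathcal{L}^{\otimes(-i)}$ as $\mathcal{O}_{W}$-modules, whose $i=0$ summand $\mathcal{O}_{W}$ splits off, whence $H^{1}(W,\mathcal{O}_{W})\hookrightarrow H^{1}(X,\mathcal{O}_{X})$. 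Next, for the $\p^{1}$-bundle $\rho$ one has $\rho_{*}\mathcal{O}_{W}=\mathcal{O}_{S}$ and $R^{1}\rho_{*}\mathcal{O}_{W}=0$ over any base and in any characteristic, so $H^{1}(S,\mathcal{O}_{S})\cong H^{1}(W,\mathcal{O}_{W})$. Finally $f_{*}\mathcal{O}_{S}=\mathcal{O}_{C}$ because $f$ has geometrically connected fibres and $C$ is smooth, so the Leray edge map $H^{1}(C,\mathcal{O}_{C})\hookrightarrow H^{1}(S,\mathcal{O}_{S})$ is injective. Chaining these, $g(C)=h^{1}(C,\mathcal{O}_{C})\le h^{1}(S,\mathcal{O}_{S})\le h^{1}(W,\mathcal{O}_{W})\le h^{1}(X,\mathcal{O}_{X})=q(X)$, and $g(C)>0$ by construction; equivalently $X$ dominates $C$, which embeds into $\mathrm{Jac}(C)$, so the universal property of the Albanese gives $\mathrm{Alb}(X)\ne 0$ as well, covering the other customary reading of the irregularity.

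For $X_{m}$ I would add one more floor: by Corollary~\ref{theorem:main-cor} the morphism $\pi_{m}\colon X_{m}\map X$ is a finite cyclic cover, hence again defined by a section of a line bundle $\mathcal{M}$ on $X$, so $(\pi_{m})_{*}\mathcal{O}_{X_{m}}=\bigoplus_{i=0}^{m-1}\mathcal{M}^{\otimes(-i)}$ contains $\mathcal{O}_{X}$ as a direct $\mathcal{O}_{X}$-module summand; using $R^{>0}(\pi_{m})_{*}=0$ once more, $H^{1}(X,\mathcal{O}_{X})\hookrightarrow H^{1}(X_{m},\mathcal{O}_{X_{m}})$, whence $q(X_{m})\ge q(X)>0$.

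The hard part is not analytic at all: no vanishing theorem is invoked, so positive characteristic is harmless. What needs to be pinned down, once the construction of {\ref{subsection:const-1}} is on the table, is purely structural — that the Ekedahl-type covering $\pi$ (and likewise $\pi_{m}$) really has the form $\mathrm{Spec}\,\bigoplus_{i}\mathcal{L}^{\otimes(-i)}$, so that $\mathcal{O}_{W}$ (resp. $\mathcal{O}_{X}$) genuinely splits off the pushforward, and that the base curve $C$ of Raynaud's fibration has genus $\ge 1$ (in fact $\ge 2$, cf. \cite{raynaud}). Both are immediate from the way $S$, $W$ and $X$ are produced, so the corollary follows.
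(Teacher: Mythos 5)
Your route is genuinely different from the paper's. The paper argues by contradiction: assuming $q(X)=0$, it produces an ample line bundle $\frak{L}$ on $X$ with $H^1(X,\frak{L}^{-1})\ne 0$ (via the description of $\text{Ker}\,F^*$, the K3-like anticanonical surface, and the vanishing results quoted from \cite{mukai} and \cite{shef-bar}) and then contradicts \cite[Ch.~II, Corollary 6.3]{kollar-rat-curves}; the non-vanishing of $H^1$ of the inverse of an ample bundle is really the content the author is after (cf.\ footnote 1 of the paper). You instead prove $q(X)=h^1(X,\mathcal{O}_X)\ne 0$ directly by chaining $H^1(\mathcal{O})$ up the tower $X\map W\map S\map C$ and using that the Tango--Raynaud curve $C$ has $g(C)=pe(pe-3)/2+1\ge 2$. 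For the corollary \emph{as stated} this is a legitimate and in fact cleaner argument (your Albanese remark, $\mathrm{Alb}(X)\twoheadrightarrow\mathrm{Jac}(C)$ and $h^1(\mathcal{O}_X)\ge\dim\mathrm{Pic}^0(X)=\dim\mathrm{Alb}(X)\ge g(C)$, is the most robust form of it), though it yields strictly less than the paper's proof, which also exhibits the ample $\frak{L}$ with $H^1(X,\frak{L}^{-1})\ne 0$.

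There is, however, a genuine error at exactly the step you single out as ``what needs to be pinned down'' and then declare immediate. The covering $\phi\colon X\map W$ is \emph{purely inseparable} of degree $p=\mathrm{char}\,\fie$: it is a $\mathbb{G}_a$-torsor-type construction built from a non-split extension, not a $\mu_p$-cover of degree prime to the characteristic. Consequently $\phi_*\mathcal{O}_X$ is \emph{not} $\bigoplus_{i=0}^{p-1}(\pi^*L)^{-i}$, and $\mathcal{O}_W$ does not split off as a direct summand --- there is no trace splitting when the degree is divisible by $p$, and pullback along inseparable maps can genuinely kill first cohomology (this is the engine of the whole paper, via $\text{Ker}\,F^*\ne 0$). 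What is true is that $\phi_*\mathcal{O}_X$ carries a filtration whose graded pieces are $(\pi^*L)^{-i}$, $i=0,\dots,p-1$; since $L$ is ample on $S$, these have no global sections for $i\ge 1$, hence $H^0\bigl(W,\phi_*\mathcal{O}_X/\mathcal{O}_W\bigr)=0$, the connecting map vanishes, and the injection $H^1(W,\mathcal{O}_W)\hookrightarrow H^1(X,\mathcal{O}_X)$ you need still holds. (Alternatively, the Albanese argument bypasses the pushforward computations for $\phi$ and for $\psi\colon S\map C$ altogether.) The $\mu_m$-cover step is fine as written, since $(m,p)=1$ there. With either repair your proof closes.
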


(Both of Corollaries~\ref{theorem:main-cor},
\ref{theorem:main-cor-1} are proved at the end of
Section~\ref{section:exa}.)

\refstepcounter{equation}
\subsection{}
\label{subsection:intro-1}

Theorem~\ref{theorem:main} and Corollaries~\ref{theorem:main-cor},
\ref{theorem:main-cor-1} seem to be the ``minimal" illustrations
of pathological behavior of Fano varieties in positive
characteristic (cf. the discussion after
Remark~\ref{remark:van-straten}). Note also that
Theorem~\ref{theorem:main} provides an unbounded family of
(smooth) Fano $3$-folds (cf. \cite{ko-mi-mo}, \cite{bir-unbound}).
Furthermore, it follows easily from the arguments after
Lemma~\ref{theorem:van-th} below that the cones
$\overline{NE}(X_b)$, with varying $b \in B_d$, admit infinitely
many ``jumps" (compare with \cite{totaro-jump}, \cite{hacon},
\cite{wies}).

Thus, one can see that essentially every ``standard" property of
Fano manifolds breaks in positive characteristic, the fact which
is due (in our opinion) to the following principle behind (compare
with Remark~\ref{remark:char-p-pathology} below):
$$
\text{\it every object over}\ \fie, \text{\it char}~\fie>0,\
\text{\it is defined \emph{only up to} the Frobenius
twist}.\footnote{As, for example, the hyperelliptic curve $y^2=x^p
- a,a\in\fie$, of genus $(p-1)/2$, covered (after Frobenius twist)
by a rational curve (see \cite[Corollary 1]{tate}).}
$$
(This was probably first exploited in \cite{tate}, while the
ultimate reading on the subject are \cite{mumford-pat-1},
\cite{mumford-pat-2} and \cite{mumford-pat-3} of course.)

Finally, it would be interesting to work out the constructions in
{\ref{subsection:const-2}}, taking a (supersingular) Kummer
surface in place of $S$ (cf. \cite{bog-tsch}).

\bigskip

\section{The construction}
\label{section:exa}

\refstepcounter{equation}
\subsection{}
\label{subsection:const-0}

Fix a smooth projective surface $S$. We will denote by
$\mathcal{E}$ (resp. $\mathcal{F}$) a vector bundle (resp. a
coherent sheaf) on $S$. Let us recall some standard notions and
facts about $\mathcal{E}$ and $\mathcal{F}$ (see e.g.
\cite{don-kron}, \cite{fried}, \cite{okonek-et-al}).

First of all, one defines the Chern classes $c_i :=
c_i(\mathcal{E})\in A^i(S)$ for $\mathcal{E}$ (and similarly for
$\mathcal{F}$, using a locally free resolution), with $c_1 =
\det\ve$. In fact, letting $W := \p(\ve)$ there is a natural
inclusion $A^*(S)\hookrightarrow A^*(W)$ of groups of cycles
induced by the projection $\pi: W \map S$, and the following
identity (called the \emph{Hirsch formula}) holds:
$$
H^2 + H\cdot c_1 + c_2 = 0.
$$
Here $H := \mathcal{O}(1)\in A^1(W)$ is the Serre line bundle on
$W$ and $c_i$ are identified with $\pi^*(c_i)$ (cf.
Remark~\ref{remark:p-m-rem}). In particular, for $r := \rk~\ve =
2$ we have
$$
H^3 = -c_1^2 - c_2,
$$
which together with the Euler's formula
\begin{equation}
\nonumber K_W = -rH + \pi^*(K_S + c_1)
\end{equation}
gives
$$
(-K_W^3) = 6K_S^2 + 10c_1^2 + 24K_S\cdot c_1 - 8c_2.
$$

\begin{remark}
\label{remark:p-m-rem} When $\fie\subseteq\com$ (so that $c_i\in
H^{2i}(S,\cel)$) and the structure group of $\ve$ is not $SU(2)$
(or $H^1(S,\cel)\ne 0$), the previous formulae (except for the
Euler's one and with suitably corrected $(-K_W^3)$) should be read
with all the $c_i$ up to $\pm$. In fact, when $\ve\simeq \ve^*$
(the dual of $\ve$) and $\simeq$ is non-canonical, there is no
preference in choosing $\pi^*(c_i)$ or $-\pi^*(c_i)$ as one may
change the orientation in the fibers of $\ve$. In particular, the
Hirsch formula turns into $H^2 \pm H\cdot c_1 \pm c_2 = 0$, with
both $c_i$ having a definite sign at once.
\end{remark}

Further, let $Z\subset S$ be a $0$-dimensional subscheme supported
at a finite number of points $p_1,\ldots,p_m$. Put $\ell_i :=
\dim~\mathcal{O}_{S,p_i}/I_{Z,p_i}$ and $\ell(Z):=\sum_i\ell_i$
(the \emph{length} of $Z$) for the defining ideal $I_Z$ of $Z$.
One can show that $c_2(j_*\mathcal{O}_Z) = -\ell(Z)\in A^2(S)$,
where $j:Z\map X$ is the inclusion map. In particular, if $\ve$
admits a splitting
\begin{equation}
\label{split} 0 \to \mathcal{L} \to \ve \to \mathcal{L}'\otimes
I_Z\to 0
\end{equation}
for some line bundles $\mathcal{L},\mathcal{L}'\in\pic(S)$, we
obtain (using the Whitney's formula)
\begin{equation}
\label{cherns} c_1 = c_1(\mathcal{L})+c_1(\mathcal{L}'),\qquad c_2
= c_1(\mathcal{L})\cdot c_1(\mathcal{L}') + \ell(Z).
\end{equation}

\begin{ex}
\label{example:sec-split} Let $s\in H^0(S,\ve)$ be a section.
Assume for simplicity that the zero locus $(s)_0\subset S$ of $s$
has codimension $\ge 2$. Then one gets an exact sequence of
sheaves $0\to \mathcal{O}_S\stackrel{s}{\map}\ve\stackrel{\wedge
s}{\map}\bigwedge^2\ve\otimes I_{(s)_0}\to 0$ (cf. \eqref{split}).
\end{ex}

Suppose now that $r=2$. Let $\mathcal{L}\subset\ve$ be a line
subbundle such that the sheaf $\ve/\mathcal{L}$ is torsion-free.
Then by working locally it is easy to obtain an exact sequence
\eqref{split}. All such sequences (\emph{extensions} of $\ve$) are
classified by the group $\text{Ext}^1(\mathcal{L}'\otimes
I_Z,\mathcal{L})$, which in the case when $Z$ is a locally
complete intersection coincides with $\mathcal{O}_Z$. (More
precisely, since $\mathcal{L},\mathcal{L}'$, etc. are defined up
to the $\fie^*$-action, the classifying space for the extensions
is the projectivization $\p(\text{Ext}^1(\mathcal{L}'\otimes
I_Z,\mathcal{L})) = \p(\mathcal{O}_Z)$.) Note also that
\eqref{split} provides a \emph{locally free} extension when
$H^2(S,\mathcal{L}'^{-1}\otimes \mathcal{L}) = 0$ (Serre's
criterion).

\refstepcounter{equation}
\subsection{}
\label{subsection:const-1}

Let $Y$ be a smooth projective variety with a line bundle $L\in
\text{Pic}(Y)$. Frobenius $F: Y \map Y$ induces a homomorphism
$$F^*: H^1(Y,L^{-1}) \to H^1(Y,L^{-p})$$ and one has
$$
\text{Ker}\,F^* \simeq \{df\in\Omega_{\fie(Y)}\ \vert\
f\in\fie(Y),(df)\ge pD \}
$$
once $L = \mathcal{O}_Y(D)$ for $D\ge 0$ (see \cite[Theorem
1]{mukai}). We may assume that $\text{Ker}\,F^*\ne 0$ (see
\cite[Theorem 2]{mukai}), $H^1(Y,\mathcal{O}_Y(-pD))=0$ for an
ample $D$ (cf. the proof of \cite[Lemma 1.4]{shef-bar}), which
yields a non-split extension
$$
0\to \mathcal{O}_Y(-D) \to \ve_{L,Y}\to \mathcal{O}_Y \to 0
$$
such that the corresponding extension for $F^*\ve_{L,Y}$ splits.
This defines a $\mathbb{G}_a$-torsor over $Y$ (w.r.t. the line
bundle $L^{-1}$) and a subscheme $X \subset \p(\ve_{L,Y})$ which
projects ``$p$-to-$1$" onto $Y$. More precisely, the morphism
$\phi: X \map Y$ is purely inseparable  of degree $p$, so that $X$
is regular and $K_X = \phi^*(K_Y - (p-1)D)$ (see \cite[Proposition
1.7,\,(16)]{mukai}).

\begin{ex}
\label{example:ray-stuff} Let $P(y)$ be a polynomial of degree $e$
and $C\subset\mathbb{A}^2$ a plane curve given by the equation
$P(y^p) - y = z^{pe-1}$ (see \cite[Example 1.3]{mukai}). One
easily checks that $(dz)=pe(pe-3)(\infty)$ for $C\subset\p^2$
identified with its projectivization and $\infty\in C$. Then,
taking $y^pz$ (with $d(y^pz) = y^pdz$) and $e\gg 1$, we obtain
$\dim\text{Ker}\,F^* = h^1(C,(3-pe)(\infty))\ge 2$ for $D :=
(pe-3)(\infty)$. Moreover, \cite[Propositions 2.3, 2.6]{mukai}
shows that instead of $C$ we may take $Y$ as above, with
$h^1(Y,L^{-1})\ge 2$, ample $K_Y$ (for $p\ge 3$) and arbitrary
$\dim Y\ge 2$.
\end{ex}

\refstepcounter{equation}
\subsection{}
\label{subsection:const-2}

Now let $S:=Y$ be the \emph{Raynaud surface}. Recall that $S$ can
be realized as a double cover of the $\p^1$-bundle $\p(\ve_{L,C})$
over the curve $C$ from Example~\ref{example:ray-stuff}, ramified
along a section and the $p$-cover $X$ of $C$ as above (run the
constructions in \cite[2.1]{mukai} with $k:=2$). Let $\psi: S\map
C$ be the induced rational curve fibration.

\begin{prop}
\label{theorem:exp-for-c-i-k-nef} In the previous setting, there
exists a vector bundle $\ve$ on $S$ such that

\begin{itemize}

    \item $r=2,c_2=0$ and $c_1=-nK_S$ for an arbitrary $n\gg 1$;

    \smallskip

    \item the divisor $-K_W$ is nef on $W = \p(\ve)$ (cf. {\ref{subsection:const-0}}).

\end{itemize}

\end{prop}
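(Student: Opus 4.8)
\smallskip

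The plan is to realize $\ve$ as a two--term extension of line bundles of the shape \eqref{split}, to read off $r,c_1,c_2$ from the Whitney formula \eqref{cherns}, to compute $-K_W$ from the Euler formula of {\ref{subsection:const-0}}, and to deduce that $-K_W$ is nef from an elementary slope estimate for curves on $W$. The construction of $\ve$ will be essentially formal; the substantive point is the nefness of $-K_W$.

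\smallskip

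Concretely, I would fix an integer $a$ with $0\le a\le(n-1)/2$, put $\mathcal{L}:=\mathcal{O}_S(aK_S)$ and $\mathcal{L}':=\mathcal{O}_S(-(n+a)K_S)$, and take a reduced $0$--dimensional (hence locally complete intersection) subscheme $Z\subset S$ of length $\ell(Z)=a(n+a)K_S^2\ge 0$ (so $Z=\varnothing$ when $a=0$). Since $H^2(S,\mathcal{L}'^{-1}\otimes\mathcal{L})=H^0(S,\mathcal{O}_S((1-n-2a)K_S))^\vee=0$ for $n\gg 1$, Serre's criterion recalled in {\ref{subsection:const-0}} provides an extension \eqref{split} with $\ve$ locally free; for $a=0$ one may simply take $\ve=\mathcal{O}_S\oplus\mathcal{O}_S(-nK_S)$. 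Then $r=2$, and \eqref{cherns} gives $c_1=c_1(\mathcal{L})+c_1(\mathcal{L}')=-nK_S$ and $c_2=c_1(\mathcal{L})\cdot c_1(\mathcal{L}')+\ell(Z)=-a(n+a)K_S^2+a(n+a)K_S^2=0$. The Euler formula then yields $K_W=-2H+\pi^*(K_S+c_1)=-2H-(n-1)\pi^*K_S$, i.e. $-K_W=2H+(n-1)\pi^*K_S$.

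\smallskip

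To see that $-K_W$ is nef I would bound $-K_W\cdot\Gamma$ below on an irreducible curve $\Gamma\subset W$. If $\Gamma$ is a $\pi$--fibre then $-K_W\cdot\Gamma=2H\cdot\Gamma=2>0$. Otherwise $\pi$ maps $\Gamma$ onto a curve $\bar\Gamma:=\pi(\Gamma)\subset S$ with some degree $d\ge 1$; passing to the normalization $\nu\colon\widetilde\Gamma\map\bar\Gamma\hookrightarrow S$ one regards $\Gamma$ as the image of a section of $\p(\nu^*\ve)\map\widetilde\Gamma$, i.e. of a line subbundle $\mathcal{M}\subset\nu^*\ve$, so that $H\cdot\Gamma=-\deg_{\widetilde\Gamma}\mathcal{M}$ while $\pi^*K_S\cdot\Gamma=d\,(K_S\cdot\bar\Gamma)$, whence $-K_W\cdot\Gamma=-2\deg_{\widetilde\Gamma}\mathcal{M}+(n-1)\,d\,(K_S\cdot\bar\Gamma)$. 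Pulling \eqref{split} back along $\nu$ and composing $\mathcal{M}\hookrightarrow\nu^*\ve$ with the projection onto $\nu^*\mathcal{L}'$: either this composite is nonzero, so $\deg_{\widetilde\Gamma}\mathcal{M}\le d\,(c_1(\mathcal{L}')\cdot\bar\Gamma)=-d(n+a)(K_S\cdot\bar\Gamma)\le 0$; or it vanishes, and then (as $\mathcal{M}$ is torsion--free and the kernel of $\nu^*(\mathcal{L}'\otimes I_Z)\to\nu^*\mathcal{L}'$ is torsion) $\mathcal{M}$ embeds into $\nu^*\mathcal{L}$, giving $\deg_{\widetilde\Gamma}\mathcal{M}\le d\,(c_1(\mathcal{L})\cdot\bar\Gamma)=d\,a\,(K_S\cdot\bar\Gamma)$. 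Since $K_S$ is nef on $S$ ($S$ being of general type, taken minimal) and $a\le(n-1)/2$, in either case $\deg_{\widetilde\Gamma}\mathcal{M}\le\tfrac{n-1}{2}\,d\,(K_S\cdot\bar\Gamma)$, so $-K_W\cdot\Gamma\ge(n-1-2a)\,d\,(K_S\cdot\bar\Gamma)\ge 0$. Thus $-K_W$ is nef.

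\smallskip

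I expect the only delicate point to be the passage in the last step to line subbundles of $\nu^*\ve$ over the normalizations of \emph{arbitrary} — in particular inseparable — multisections of $\pi$, together with the accompanying sign in $H\cdot\Gamma=-\deg_{\widetilde\Gamma}\mathcal{M}$ (which must be matched to the $\p$-bundle convention underlying \eqref{cherns} and the Euler formula). This is harmless here because the shape \eqref{split}, with $\mathcal{L},\mathcal{L}'$ pulled back from $S$, is preserved by every base change $\widetilde\Gamma\map\bar\Gamma$, so no line subbundle violating the slope bound can appear. (For the constructions of the following sections one will moreover want the class of \eqref{split} to be non-split and to lie in the subgroup $\ker F^*$ of {\ref{subsection:const-1}}, so that the $p$-covering of $W$ built there is non-liftable; this only constrains $\ve$ within the family \eqref{split} attached to the fixed $\mathcal{L},\mathcal{L}',Z$, and does not interfere with the above.)
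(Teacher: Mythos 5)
Your Chern--class bookkeeping via \eqref{split} and \eqref{cherns} is fine and close to the paper's own choice (the paper takes $\mathcal{L}$ ample, $\mathcal{L}'=-\mathcal{L}-nK_S$, $\ell(Z)=-c_1(\mathcal{L})\cdot c_1(\mathcal{L}')$), and your nefness strategy --- a curve-by-curve slope estimate --- is genuinely different from the paper's, which restricts $-K_W$ to $\Sigma_l=\pi^{-1}(l)$ over fibers $l$ of the Raynaud fibration $\psi:S\map C$ and combines base-point-freeness there with a vanishing theorem. But the point you yourself flag as ``the only delicate point'' --- matching the sign in $H\cdot\Gamma$ to the convention behind the Euler formula --- is exactly where the argument collapses, and it is not harmless. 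The Euler formula $K_W=-rH+\pi^*(K_S+c_1)$ holds for the Grothendieck twisting sheaf, for which a section of $\p(\nu^*\ve)\map\widetilde\Gamma$ with line subbundle $\mathcal{M}\subset\nu^*\ve$ and quotient $\mathcal{Q}$ has $H\cdot\Gamma=\deg_{\widetilde\Gamma}\mathcal{Q}=\deg\nu^*c_1-\deg\mathcal{M}$, \emph{not} $-\deg\mathcal{M}$; with the convention $H\cdot\Gamma=-\deg\mathcal{M}$ the Euler formula instead reads $K_W=-2H+\pi^*(K_S-c_1)$. (The Hirsch relation and the Euler formula as displayed in {\ref{subsection:const-0}} are mutually inconsistent unless $2c_1=0$, which is the root of the trouble.) In either \emph{consistent} convention one finds $-K_W\cdot\Gamma=\deg\nu^*c_1-2\deg\mathcal{M}-\deg\nu^*K_S=-2\deg\mathcal{M}-(n+1)\deg\nu^*K_S$; the discrepancy with your formula is $2\pi^*c_1\cdot\Gamma=-2n\deg\nu^*K_S$, precisely what converts a negative quantity into your nonnegative lower bound. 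Concretely, taking for $\Gamma$ the section over any curve $\bar\Gamma\subset S$ cut out by the subbundle $\mathcal{L}=\mathcal{O}_S(aK_S)\subset\ve$ (so $\mathcal{M}=\nu^*\mathcal{L}$) gives $-K_W\cdot\Gamma=-(n+1+2a)\,(K_S\cdot\bar\Gamma)<0$, since $K_S$ is ample on the Raynaud surface.

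The defect cannot be repaired by a cleverer choice of $\ve$: for any rank-$2$ bundle one has $\pi_*(K_W^2)=-4K_S$, hence $(-K_W)^2\cdot\pi^*A=-4\,K_S\cdot A<0$ for $A$ ample on $S$; equivalently, $-K_W$ restricted to the ruled surface $\pi^{-1}(A)$ has negative self-intersection. Since a nef divisor $D$ satisfies $D^2\cdot N\ge 0$ against every nef $N$ (Kleiman, in any characteristic), no $\p^1$-bundle over a surface with $K_S\cdot A>0$ for some ample $A$ can have nef anticanonical class. So the Proposition is false as stated --- which also means the paper's own, quite different, argument via $\Sigma_l$ cannot be sound (this is indeed among the issues surrounding the retraction mentioned in the Appendix). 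Your instinct that the sign was the dangerous point was right; the error is that you computed $H\cdot\Gamma$ in one $\p$-bundle convention and $K_W$ in the other.
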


\begin{proof}
In the exact sequence \eqref{split}, take $\mathcal{L}$ an
arbitrary ample, $\mathcal{L}' := -\mathcal{L}-nK_S$ and $Z$ any
finite with $\ell(Z)=-c_1(\mathcal{L})\cdot c_1(\mathcal{L}')$.
Then from \eqref{cherns} we get $c_1=-nK_S,c_2=0$, and the Euler's
formula gives
$$
-K_W = 2H + \pi^*(n-1)K_S.\footnote{Note also that
$H^2(S,\mathcal{L}'^{-1}\otimes \mathcal{L}) = H^2(nK_S) = 0$ and
so $\ve\in\text{Ext}^1(\mathcal{L}'\otimes I_Z,\mathcal{L})$ is
locally free.}
$$
We will deduce that this $-K_W$ is nef from the two forthcoming
lemmas.

Consider $l\subset S$, a general fiber of $\psi$, and put
$\Sigma_l := \pi^{-1}l = \p(\ve\big\vert_l)$.

\begin{lemma}
\label{theorem:van-th-as} We have
$(\Omega_{\Sigma_l}^2)^{**}\simeq
\omega_{\Sigma_l/l}\otimes\mathcal{L}^{-1}\otimes\mathcal{L}'$ for
the double dual of $\Omega_{\Sigma_l}^2$.
\end{lemma}

\begin{proof}
Note that $l$ is a rational curve with unique singular point $o :=
\text{Sing}(l)$ such that $l$ is given by the equation $y^2 = z^p$
(affine) locally near $o$. This shows that the sheaf
$\Omega^1_{l}$ has torsion, with support in $o$, for $2ydy =
pz^{p-1}dz = 0$. In particular, we get $(\Omega^1_{l})^{**} =
\mathcal{O}_l\cdot dz$ near $o$, with $dy$ being the second
(torsion) generator of $\Omega^1_{l}$. This easily yields
$$
h^0(l,(\Omega^1_{l})^{**}) \ge \frac{(p-1)(p-2)}{2} -
\frac{(p-2)(p-3)}{2} - 1\ge 0.
$$
Indeed, if $\tilde{l}\subset\p^2$ is the closure of the curve
$(y^2 = z^p)$, then $\text{Sing}(\tilde{l}) = \{o,\infty\}$ and
there is a natural morphism $l \map \tilde{l}$. The above estimate
$h^0(l,(\Omega^1_{l})^{**}) \ge 0$ now follows because $l \map
\tilde{l}$ is the normalization near $\infty$ and
$\text{mult}_{\infty}(\tilde{l}) = p - 2$.

Further, we have $$\p(\ve\big\vert_l) =
\p(\mathcal{L}\big\vert_l\oplus\mathcal{L}'\big\vert_l)$$ with a
local fiber coordinate $t$, so that (K\"ahler) $dt$ induces a
section $\tau$ of $(\Omega_{\Sigma_l}^2)^{**}$. Namely, given the
indicated properties of $\ve\big\vert_l$ and $\Omega^1_{l}$, we
obtain $$dt\wedge\xi=\tau\big\vert_{U\cap U'} =
fdt\wedge\xi\big\vert_{U'\cap U}$$ for some $\xi\in
H^0(l,(\Omega^1_{l})^{**})$ and $f\in \mathcal{O}(U\cap U'\cap l)$
generating a cycle in $H^1(l,\mathcal{O}^*_l)$. (Here
$U,U'\subset\Sigma_l$ are arbitrary open charts pulled back from
$l$, and the latter is identified with the zero-section of
restrictions $\ve\big\vert_U,\ve\big\vert_{U'}$.)

More precisely, we get that $\tau$ lifts to a section of
$(\mathcal{L}^{-1}\otimes\mathcal{L}')\big\vert_l$, which gives
$(\Omega_{\Sigma_l}^2)^{**}\simeq
\omega_{\Sigma_l/l}\otimes\mathcal{L}^{-1}\otimes\mathcal{L}'$.
\end{proof}

\begin{remark}
\label{remark:char-p-pathology} The proof of
Lemma~\ref{theorem:van-th-as} illustrates the following phenomenon
typical to $\text{char}>0$: the sheaf $\Omega_X^1$ may be rank $1$
torsion, thus not equal to $\text{Hom}(T_X,\mathcal{O}_X)$, where
$\dim T_{X,x}\ge 2$ for some $x\in \text{Sing}(X)$. Let us also
point out that the interrelation between $\Omega_{\Sigma_l}^2$ and
canonical bundle of the normalization of $\Sigma_l$ is not clear
(meaning the conductor cycle need not be effective) because
$\Sigma_l$ violates the $S_2$-property (for the curve $l$
obviously does, -- the rational function $y^{1/p}: l\map\p^1$ is
defined at every point near $o$, but is not \emph{regular} at
$o$).
\end{remark}

\begin{lemma}
\label{theorem:van-th-a} The linear system
$|-K_W\big\vert_{\Sigma_l}|$ is basepoint-free.
\end{lemma}

\begin{proof}
It follows from Lemma~\ref{theorem:van-th-as} that the line bundle
$(\Omega_{\Sigma_l}^2)^*$ satisfies
$$H^0(\Sigma_l,(\Omega_{\Sigma_l}^2)^*)\supseteq H^0(\Sigma_l,\mathcal{L}\otimes\mathcal{L}'^{-1}).$$
Indeed, since $\p^1$ normalizes $l$, we get $\Omega^1_{\p^1} =
\mathcal{O}_{\p^1}\cdot dz$ near (the preimage of) $o$ (cf. the
proof of Lemma~\ref{theorem:van-th-as}). Hence the sheaf
$\omega_{\Sigma_l\slash l}$ pulls back to
$\omega_{\Sigma\slash\p^1}$ for the ruled surface $\Sigma :=
\p(\ve\big\vert_{\p^1})$. It remains to notice that (the class of)
$(\omega_{\Sigma\slash\p^1})^* = 2[\text{minimal section on}\
\Sigma] + 2[\text{fiber}]$ is ample.

On the other hand, for $s\in H^0(\Sigma_l,
(\Omega_{\Sigma_l}^2)^*)$ we get
$$
-K_W\big\vert_{\Sigma_l} = (s=0) + \Sigma_l\big\vert_{\Sigma_l} =
(s=0)
$$
by adjunction, and the claim follows.
\end{proof}

\begin{lemma}
\label{theorem:van-th} $H^1(W,-K_W-\Sigma_l) = 0$.
\end{lemma}

\begin{proof}
Notice that $$H^1\big(W,\pi_*(2H + \pi^*(n-1)K_S - \Sigma_l)\big)
= H^1\big(S,S^2\ve\otimes\mathcal{O}_S((n-1)K_S - l)\big) = 0$$ by
Serre vanishing (for $K_S$ is ample). Furthermore, we have
$$R^1\pi_*(2H + \pi^*(n-1)K_S-\Sigma_l)=0$$ by Grothendieck
comparison, since the divisor $2H + \pi^*(n-1)K_S - \Sigma_l$ is
$\pi$-ample. The assertion now follows from the Leray spectral
sequence.
\end{proof}

From the defining exact sequence for $\Sigma_l$ and
Lemma~\ref{theorem:van-th} we get a surjection
$$
H^0(W,-K_W)\twoheadrightarrow
H^0(\Sigma_l,-K_W\big\vert_{\Sigma_l}).
$$
Then by Lemma~\ref{theorem:van-th-a} the base locus of $|-K_W|$ is
contained in the fibers of $\pi$. This shows that $-K_W$ is nef
and finishes the proof of
Proposition~\ref{theorem:exp-for-c-i-k-nef}.
\end{proof}

Recall that $H^1(S,L^{-1})\ne 0$ for $L = \mathcal{O}_S(D)$ (cf.
the beginning of {\ref{subsection:const-2}}). One can also observe
that $H^1(W,\pi^*L^{-1})\simeq H^1(S,L^{-1})$. Hence, setting $Y
:= W$ (resp. $L := \pi^*L$) in the notations of
{\ref{subsection:const-1}}, we can pass to the corresponding
$p$-cover $\phi: X = \p(\ve_{\pi^*L,W}) \map W$. The divisor
$-K_X$ is ample by Proposition~\ref{theorem:exp-for-c-i-k-nef} and
Kleiman's criterion (see \cite[Ch. I, Theorem
8.1]{hart-ample-subvars}).

Finally, since $W = \p(\ve)$ for $\ve$ constructed as an extension
(cf. {\ref{subsection:const-0}}), to complete the proof of
Theorem~\ref{theorem:main} it suffices to vary $\mathcal{L}$ and
$Z$ as above in such a way that both $c_1(\ve)=-nK_S,c_2(\ve)=0$
remain fixed, while $\ell(Z)\to\infty$ (cf. \eqref{cherns} and the
properties of $\ve$ in
Proposition~\ref{theorem:exp-for-c-i-k-nef}). The assertion then
follows from the formula for $(-K_W^3)$ in
{\ref{subsection:const-0}} and the fact that the cone
$\overline{NE}(X)$ is finite polyhedral (see e.g. \cite{kollar},
\cite{keel}). More precisely, we have obtained that $X =
\mathcal{X}_b$ surjects onto $W = \p(\ve)$, with $\ve\in
\text{Ext}^1(\mathcal{L}'\otimes I_Z,\mathcal{L})$ varying
together with $Z$ (or $b\in B_d$). Then it follows from the
discussion after Example~\ref{example:sec-split} that for
different $\ve$ the $3$-folds $W$ are non-isomorphic. This gives
$\mathcal{X}_b\not\simeq \mathcal{X}_{b'}$ for generic $b'\in
B_{d'},d'\ne d$, and proves the second claim of
Theorem~\ref{theorem:main}, whereas the other two are evident from
the construction of $X$.

\begin{remark}
\label{remark:conslus} Alternatively, one could show that already
$W$ is Fano, by applying the results in \cite{bir}, \cite{xu} (and
the formula for $(-K_W^3)$ of course).
\end{remark}

We now pass on to the proof of Corollaries:

\begin{proof}[Proof of Corollary~\ref{theorem:main-cor}]
$X_m$ is obtained by the cyclic covering of $X$ with ramification
at a smooth surface from $|-mK_X|$ (see e.g. \cite{kol-mor}). This
gives $\mu_m\in\aut(X_m)$ (with $X = X_m\slash\mu_m$) and $(m,p) =
1$. We may assume the $\mu_m$-action on $X_m$ comes from a cyclic
projective action on some $\p^N\supset X_m$.

Now, if $X_m$ were liftable to $\com$, the $\mu_m$-action on
$X_m\subset\p^N$ must also lift. Then we would get that $X =
X_m\slash\mu_m$ is liftable to $\com$, a contradiction.
\end{proof}

\begin{proof}[Proof of Corollary~\ref{theorem:main-cor-1}]
Suppose that $q(X) = 0$. We are going to find an ample line bundle
$\frak{L}$ on $X$ such that $H^1(X_m,\pi_m^*\frak{L}^{-1}) \ne 0$
and $(p-1)\pi_m^*\frak{L} - K_{X_m}$ is ample. This will
contradict \cite[Ch. II, Corollary 6.3]{kollar-rat-curves}

The description of $\text{Ker}\,F^*$ in {\ref{subsection:const-1}}
and the fact that $h^1(W,\pi^*L^{-1}) = h^1(S,L^{-1})\ge 2$ (cf.
Example~\ref{example:ray-stuff}) yield
$h^1(X,(\phi\circ\pi)^*L^{-1})\ge 2$ -- for not every element in
$H^1(W,\pi^*L^{-1})$ is a $p$-power (when lifted to $X$).
Furthermore, the same argument shows that
$H^1(X_m,\pi_m^*\frak{L}^{-1})\ne 0$, provided we have found
$\frak{L}$ on $X$ as needed.

Now set $\frak{L}:=\mathcal{O}_X(-K_X + (\pi\circ\phi)^*D)$. This
is obviously ample. Moreover, by the vanishing in
\cite[Proposition 6.1]{shef-bar} and results from \cite[\S\S
3,4]{shef-bar} we may assume generic surface $S_X\in|-K_X|$ to be
smooth (hence \emph{\text{K3}-like}), which leads to an exact
sequence
$$
0 \to H^1(X,-S_X - (\pi\circ\phi)^*D)\to
H^1(X,-(\pi\circ\phi)^*D)\to H^1(S,-(\pi\circ\phi)^*D\big\vert_S).
$$
Here $H^1(S,-(\pi\circ\phi)^*D\big\vert_S)=0$ by \cite[Proposition
3.4]{mukai}, $H^1(X,-(\pi\circ\phi)^*D)\ne 0$ by the previous
considerations, $\mathcal{O}_X(K_X - (\pi\circ\phi)^*D) =
\frak{L}^{-1}$, and thus we are done.\footnote{Note that
$(p-1)\pi_m^*\frak{L} - K_{X_m}\equiv \pi_m^*\big(-(p - 1 +
\displaystyle\frac{m+1}{m})K_X + (\pi\circ\phi)^*D\big)$ is
ample.}

Hence we get $q(X) \ne 0$. Then also $q(X_m) \ne 0$ for the flat
morphism $\pi_m$.
\end{proof}

\bigskip

\thanks{{\bf Acknowledgments.}
I am grateful to C. Birkar, A.\,I. Bondal, S. Galkin, M.
Kontsevich, and Yu.\,G. Prokhorov for their interest and helpful
conversations. Also the referee's suggestions helped me to improve
the exposition. Some parts of the paper were prepared during my
visits to AG Laboratory (HSE, Moscow) and University of Miami
(US). I am grateful to both Institutions for hospitality. The work
was supported by World Premier International Research Initiative
(WPI), MEXT, Japan, and Grant-in-Aid for Scientific Research
(26887009) from Japan Mathematical Society (Kakenhi).

\bigskip

\renewcommand{\thesubsection}{\bf A.\arabic{equation}}

\renewcommand{\theequation}{A.\arabic{equation}}

\renewcommand{\thesection}{}

\section{}

\section*{Appendix}

\refstepcounter{equation}
\subsection{}
\label{subsection:com-1}

Let me elaborate further on the footnote 1) on page 1 of the
present paper. The second sentence in the proof of \cite[Theorem
1.4]{shef-bar} claims that ``By Serre vanishing, we may assume
that $H^1(X,\mathcal{O}(-pD)) = 0$ \ldots". This phrase is
actually the cornerstone for the whole \cite[Section 1]{shef-bar}
and is totally misleading (in fact false) -- it was never
explained in \cite{shef-bar} why one can apply Serre vanishing to
the \emph{fixed} $p$ and an \emph{arbitrary} ample $D$ (whereas
vanishing requires (a priori arbitrary) \emph{sufficiently large}
$p$). There are more confusing assertions in \cite[Section
1]{shef-bar} (such as e.g. the name \emph{irregularity} (and the
notation $q(X)$) for the number $h^0(X,\Omega_X^1)$). Amazingly,
the author of \cite{shef-bar} has presented same results at a
recent conference in Moscow (2017), with apparently a fare
success.

\refstepcounter{equation}
\subsection{}
\label{subsection:com-2}

Initially the present paper was accepted for publishing in
Manuscripta Mathematica. However, some time later I have received
a message from the Editors, informing me that there were several
anonymous emails which claimed that the constructions of my paper
are incorrect. Basically, there were two mathematically fruitful
extra-reports, expressing the concerns, whose content and my
responses to them are available at request. Let me stress though
that neither of the reports contained a satisfactory support for
their authors' strong acquisitions. Nevertheless, despite this
fact the Editors of Manuscripta initiated a retraction process for
the paper, and the manuscript had actually been retracted
recently. My strong disagreement and the evidence in its favor
were completely ignored. Nor I was allowed to support my paper
with the corresponding Addendum.

\refstepcounter{equation}
\subsection{}
\label{subsection:com-3}

The present situation around my paper is a result of personal
conflict and is a part of continuous manhunt on me (initiated
unfortunately by my former advisor Cheltsov). It is quite sad that
such mechanisms as scientific publishing are used, on the one
hand, in order to push forward wrong papers (cf. \cite{shef-bar}),
and suppress ones mathematical personality on the other. There is
no trace here of \emph{objective} delivering the scientific
results to the community.

\bigskip

\end{document}